\documentclass[11pt,a4paper]{article}
\usepackage[T1]{fontenc}
\usepackage{lmodern}
\usepackage[margin=25mm]{geometry}
\usepackage{amsmath,amssymb,amsthm,booktabs,graphicx,microtype,enumitem,caption,titling,float,xcolor}
\usepackage[colorlinks=true,linkcolor=black,citecolor=black,urlcolor=black]{hyperref}
\setlist{itemsep=3pt,topsep=4pt,leftmargin=*}
\newtheorem{theorem}{Theorem}
\newtheorem{lemma}{Lemma}
\newtheorem{proposition}{Proposition}
\newtheorem{corollary}{Corollary}
\newcommand{\lo}{\lambda}
\newcommand{\xor}{\mathbin{\oplus}}

\title{Constructing longer snakes and\\improved asymptotic bounds in hypercubes}
\author{Tom Taylor}
\date{}
\hypersetup{pdftitle={Constructing longer snakes and improved asymptotic bounds in hypercubes},pdfauthor={Tom Taylor}}

\begin{document}
\maketitle
\begin{abstract}
% Source Tab 5 block 4
We give snakes that are longer than the previous best-known in dimensions 13--20 and improve the general lower bound for every dimension $d\ge21$. Our explicit snakes reach 371,711 edges in dimension 20. Twenty compatible paths in that cube allow generalisation to give snakes of length at least $(17/48)2^d$ for every $d\ge21$. Their controlled overlaps allow copies to be joined across the layers of a larger cube without creating shortcuts. Four additional paths give the same bound for coils. We explain the construction, prove the joining rule, then count its length. The finite paths and their required intersections are independently verifiable.

\end{abstract}

\section{The problem and earlier work}
% Source Tab 5 block 6
The hypercube $Q_d$ has $2^d$ vertices, represented by binary strings of length $d$. Two vertices are joined by an edge if and only if they differ in exactly one coordinate. Therefore, choosing a set of vertices determines all the edges between them. A snake is a set whose induced graph is a single path. Equivalently, its vertices can be ordered without repetition so that consecutive vertices are adjacent and no other pair is adjacent. An extra edge between nonconsecutive vertices is called a chord, or shortcut. For example, $000,001,011$ form a snake, adding $010$ closes a square.

% Source Tab 5 block 7
We measure length in edges, a snake with $L$ edges contains $L+1$ vertices. Write $a(d)$ for the greatest possible snake length in $Q_d$. Constructing and checking one snake of length $L$ proves the lower bound $a(d)\ge L$.

% Source Tab 5 block 8
Earlier work combines searches for long paths in individual dimensions with constructions that apply in every sufficiently large dimension. Abbott and Katchalski obtained the leading coefficient $77/256$ [1, 2]. Evdokimov describes how compatible paths can be joined in Gray-ordered layers [5; pp. 58–59]. Computational work includes the record constructions of Allison and Paulusma, the recent searches of Orland et al. and Bernatonis, and Itty’s posted paths in dimensions 12 and 13 [3, 7, 8, 4]. We combine finite search with the layer construction, using longer compatible paths to improve the resulting bound.

% Source Tab 5 block 9
First we extend the existing ‘record snakes’ by a combination of local iterations and repairs. Then we find a specific set of snakes in $Q_{20}$, with precise overlap, which allow construction of longer snakes in all higher dimensions.

\section{The new bounds}
% Source Tab 5 block 11
Table 1 displays our verified finite snakes. The construction in Section 4 establishes the following bound in every higher dimension.

\begin{theorem}\label{thm:main}
% Source Tab 5 block 12
For every integer $d\ge21$,

\[
a(d)\ \ge\ \left\lceil\frac{928503}{2621440}\,2^d+\frac{683}{5}\right\rceil\ \ge\ \frac{17}{48}\,2^d.
\]

\end{theorem}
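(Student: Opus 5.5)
The plan is to realise the bound by an explicit layered (Evdokimov-type) construction. Write $d=20+k$ with $k\ge1$, and split each vertex of $Q_d$ as $(x,y)$ with $x\in Q_{20}$ and $y\in Q_k$. Order the $2^k$ layers $\{x\}\times\ldots$, more precisely the copies $Q_{20}\times\{y\}$, along a Gray-code cycle $y_0,y_1,\dots,y_{2^k-1}$ of $Q_k$. In layer $y_j$ place one of the twenty compatible paths $P_{i(j)}\subset Q_{20}$ found by search. Consecutive layers are joined by a single edge $(u,y_j)(u,y_{j+1})$, where $u$ is a common endpoint of the two paths used. The index sequence $i(j)$ is chosen as a function of the Gray-code step direction, so that only finitely many patterns occur. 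The total length will then be $\sum_j |P_{i(j)}|+(2^k-1)$, possibly minus a bounded correction at the two ends.

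The first key step is the joining rule, which must show that the union is an induced path. Two vertices $(x,y_j)$ and $(x',y_l)$ can be adjacent only if $y_j=y_l$ and $xx'$ is an edge, or $x=x'$ and $y_j,y_l$ are adjacent in $Q_k$. The first case is excluded because each $P_i$ is itself a snake. For the second case we need the following: whenever $y_j$ and $y_l$ are adjacent in $Q_k$, the vertex sets of $P_{i(j)}$ and $P_{i(l)}$ intersect only in the designated joining vertex if $|j-l|=1$, and are disjoint otherwise. These are exactly the ``controlled overlaps'' mentioned in the abstract, stated as a finite table of required intersections among the twenty paths. I would organise the layers by the coordinate flipped at each Gray step, using the binary reflected code, and assign paths by the parity or transition-count class of $y_j$. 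Adjacent-but-nonconsecutive layers then always fall into one of a bounded number of path-pair types, and each type is checked finitely.

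The second step is counting. The path lengths, averaged over the Gray-cycle pattern, will give a per-layer average $\bar L$ with $\bar L/2^{20}=928503/2621440$, that is, $\bar L\cdot 2^{20}/2^{20}$, with the denominator $2621440=5\cdot 2^{19}$ reflecting a five-way weighting of the path types. This yields $a(d)\ge \frac{928503}{2621440}2^d+\frac{683}{5}$, where the additive constant comes from the $2^k-1$ joining edges and boundary effects absorbed exactly. Integrality of $a(d)$ then gives the ceiling. Finally, $928503/2621440\ge 17/48$ holds because $928503\cdot48=44568144\ge 17\cdot2621440=44564480$, and the positive additive constant only helps.

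The main obstacle is the joining rule for nonconsecutive but adjacent layers. The Gray-code pattern must be arranged so that the required pairwise intersections form a finite, $k$-independent list that the twenty paths actually satisfy. Handling all $k\ge1$ uniformly, including small $k$ where the boundary layers behave differently, is where I expect the most care. I would first try induction on $k$ via reflection of the Gray code, checking that reflecting preserves the pattern classes.
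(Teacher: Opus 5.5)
\textbf{There is a genuine gap: the joining rule is unproved, and the rule you propose for it fails.} Your framework is the paper's: copies of $Q_{20}$ in binary reflected Gray order, finitely many verified paths with prescribed pairwise intersections, and finite checking. What is missing is the step that makes this work uniformly in $k$. The joining vertex at a Gray step cannot be a function of the step direction, even together with the parity of the position. For $k\ge3$, the steps $1\to2$ and $5\to6$ both flip the middle bit ($001\to011$ and $111\to101$), and both start at odd positions. Yet layers $g(1)=001$ and $g(6)=101$ are adjacent, so using the same joining vertex $u$ at both steps creates the chord $(u,001)(u,101)$.

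The paper handles this as follows:
\begin{enumerate}
\item It gives the twenty paths a root/tip structure: $I$ and $O$ families, two roots, five tips, and the intersection rule (1).
\item It fixes the root joins by the parity of the outer bits other than the last. Your ``parity class'' idea can do this part.
\item It reduces the tip joins to a proper colouring of the conflict graph $H$ on tip pairs, with $x\sim y$ iff $x\ne y$ and $|x-y|\le\lambda(x+y)$. This test comes from the Gray adjacency criterion $|i-j|<\lambda(i+j+1)$.
\end{enumerate}
The real lemma is that $H$ is $5$-colourable for every $k$. Your instinct to induct by reflecting the Gray code is right, but it does not close by itself. Mirror positions $M\pm a$ conflict, and the middle position $M$ conflicts with every position at power-of-two distance on either side. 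The paper therefore recolours the reflected half by a fixed-point-free permutation of the five colours. It also carries an invariant: the power-of-two boundary positions use two three-colour palettes sharing exactly one colour, which always leaves a colour free for the middle.

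\textbf{The counting is also not justified as written.} A single valid colouring uses the tips unevenly; in the paper's sixteen-layer example one tip is never used. So averaging over the layer pattern does not give the mean path size $T/20=371401.2$, which is what $928503/2621440=(T/20)/2^{20}$ encodes; here $T$ is the total of the twenty path sizes. The paper gets this coefficient by comparing ten valid constructions: five cyclic relabellings of tips by colours, each with both root orders. Across these ten, every interior tip pair uses each of the twenty paths exactly once, giving mean $T/10$ per pair, and the maximum is at least the mean.

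The constant $683/5=S_0/2-T/10-3$ has a different source from the one you give. Here $S_0$ is the total of the four $t_0$-paths. The constant comes from the two boundary layers using $t_0$-paths and from deleting the two boundary copies of $t_0$; that deletion is needed because the first and last Gray layers are adjacent. It does not come from the $2^k-1$ joining edges, which grow with $k$ and are absorbed into the per-layer vertex counts. Your final comparison $928503\cdot48\ge17\cdot2621440$ is correct.
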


% Source Tab 5 block 14
The clean coefficient $17/48\approx0.35417$ improves the classical Abbott–Katchalski coefficient $77/256\approx0.30078$ by about $17.75\%$ [1, 2].

\begin{table}[H]
\centering\small
\begin{tabular}{rrrr}
\toprule
Dimension & Earlier snake comparison & Verified snake & Growth\\
\midrule
13 & 2,934 & 2,938 & $0.14\%$\\
14 & 5,750 & 5,844 & $1.63\%$\\
15 & 11,249 & 11,673 & $3.77\%$\\
16 & 21,411 & 23,318 & $8.91\%$\\
17 & 40,835 & 46,576 & $14.06\%$\\
18 & 78,958 & 92,989 & $17.77\%$\\
19 & 157,898 & 185,903 & $17.74\%$\\
20 & 315,798 & 371,711 & $17.71\%$\\
\bottomrule
\end{tabular}
\caption{% Source Tab 5 block 16
Snake lengths in edges, with percentage growth over the listed comparisons. Sources are Itty for $d=13$ [4], Bernatonis for $d=14$–17 [8], and the Abbott–Katchalski values tabulated by Allison and Paulusma for $d=18$–20 [3].}
\end{table}

% Source Tab 5 block 17
A coil is a cycle with no chord. Removing one vertex from a coil of length $C$ gives a snake of length $C-2$. Whilst a coil necessarily creates snakes, snakes do not necessarily produce a coil. A fresh closure search starting from the snakes in Table 1 gives the coils in Table 2. The general construction also gives $b(d)\ge(17/48)2^d$ for the greatest coil length $b(d)$ in every $d\ge21$, as Section 4.6 proves.

\begin{table}[H]
\centering\small
\begin{tabular}{rrrr}
\toprule
Dimension & Earlier coil comparison & Verified coil & Growth\\
\midrule
14 & 4,934 & 5,750 & $16.54\%$\\
15 & 9,868 & 11,566 & $17.21\%$\\
16 & 19,740 & 23,206 & $17.56\%$\\
17 & 39,480 & 46,470 & $17.71\%$\\
18 & 78,960 & 92,850 & $17.59\%$\\
19 & 157,900 & 185,778 & $17.66\%$\\
20 & 315,800 & 371,586 & $17.66\%$\\
\bottomrule
\end{tabular}
\caption{% Source Tab 5 block 19
Coils obtained from the latest snakes. Comparisons are from Allison and Paulusma [3].}
\end{table}

\section{Finding the finite paths}
% Source Tab 5 block 21
A lift joins two paths in dimension $x$ to make a snake in dimension $x+1$. Give vertices of the first path a new coordinate $0$, and those of the second a new coordinate $1$ such that $v$ becomes $(v,0)$ or $(v,1)$. Each path keeps its existing edges. Between the layers, $(v,0)$ and $(u,1)$ are adjacent if and only if $v=u$. Thus every shared vertex of the original paths creates a cross-layer edge.

% Source Tab 5 block 22
For this simple lift to form one snake, the original paths must share exactly one vertex, which is an endpoint of both. It supplies the single joining edge, giving length $L_1+L_2+1$ from paths of lengths $L_1$ and $L_2$. Our search seeks two long paths with this property.

% BEGIN SEARCH BOX
\begin{center}
\fcolorbox{black!45}{white}{\begin{minipage}{0.94\linewidth}
\small
\textbf{Iterative search}\par\smallskip
\textbf{Input:} a verified snake $A$, a higher target dimension $D$, and a computation limit.\par
\textbf{While $A$ has fewer than $D$ dimensions and compute remains:}\par
1. Choose a coordinate $c$ that $A$ changes rarely. Flip it at every vertex to form $B$.\par
2. Choose an endpoint $a$ of $A$ as the joining vertex.\par
3. Repair $B$ using shortest bridges through permitted vertices. Require $B$ to be induced, to end at $a$, and to share only $a$ with $A$. If the repair fails these checks, retry the loop with different choices.\par
4. Put $A$ and $B$ in the two layers of a cube with one extra coordinate. Join their copies of $a$, and call the resulting snake $A$ (to iterate).\par
5. Improve $A$ while keeping its endpoints fixed and exactly one crossing of the new coordinate.\par\smallskip
\textbf{After the lifts:} use remaining compute for endpoint pivots and local improvements.\par
Return the longest verified snake in the highest dimension reached.
\end{minipage}}
\end{center}
% END SEARCH BOX
Flipping a coordinate used $q$ times gives exactly $2q$ shared vertices.
Permitted bridges avoid the vertices of $A$ other than the chosen endpoint $a$, and unwanted contacts with the retained parts of $B$.
A pivot preserves length but can expose a new extension;
Appendix~\ref{app:pivot} gives the precise move.\footnote{We also extract lower-dimensional snakes from uninterrupted sections in which one coordinate stays fixed. Deleting that coordinate preserves the section’s length and all its adjacencies, allowing improvements found in higher dimensions to feed back into lower ones. We call this face projection.}

% Source Tab 5 block 28
The $Q_{12}$ and $Q_{13}$ inputs, of lengths 1,480 and 2,934, are Nathaniel Itty’s publicly posted paths [4]. Our higher-dimensional chain starts from Bernatonis’s paths, extending an Orland et al. seed [8, 7]. Local rerouting and joining lower-dimensional snakes have precedents in Brown, Wynn and Echols [9, 10, 11].

% Source Tab 5 block 29
For coils, we try short closing paths between the snake’s endpoints, sometimes trimming a short section first.

% Source Tab 5 block 30
To generalise beyond the dimensions searched, we need a collection of paths whose overlaps remain controlled when many copies are used. We searched for compatible paths in $Q_{20}$ and improved them while preserving their required intersections. The resulting collection is the finite input to the proof below.

\section{Generalising the construction to higher dimensions}
% Source Tab 5 block 32
We now use twenty specific paths found in $Q_{20}$ to construct snakes in every larger dimension. Write $d=20+m$ with $m\ge1$. Each vertex of $Q_d$ is a pair $(v,b)$, where $v\in Q_{20}$ and $b\in Q_m$. Fixing the last $m$ coordinates gives one layer, a copy of $Q_{20}$, so there are $2^m$ layers. We choose one of our twenty paths for each layer and join the copies into a single snake. The same path can be used in many layers (if they are chosen correctly).

% Source Tab 5 block 33
Two neighbouring layers have an edge between their chosen paths wherever those paths share an inner vertex $v$. We therefore need paths whose overlaps occur only at controlled joining points. Once those paths have been checked, the rule below works in every dimension without further path search.

\subsection{The fixed collection of paths}
% Source Tab 5 block 35
Our twenty paths use seven (distinct) particular vertices of $Q_{20}$ as endpoints: two roots, $r_0,r_1$, and five tips, $t_0,\ldots,t_4$ (as in Evdokimov’s construction [5; pp. 58–59]). These are ordinary cube vertices; the names distinguish their roles in the joining rule. For each root-tip pair we have two different induced paths, one in each of two families. We write them in the directions in which they will be traversed:

\[
I_r(t):t\longrightarrow r,\qquad O_r(t):r\longrightarrow t.
\]

% Source Tab 5 block 37
Thus there are $2\times5\times2=20$ paths. The two families are different routes, not the same route read backwards. Each $I$ path and each $O$ path share only the named endpoints they have in common. For example, $I_{r_0}(t_0)$ and $O_{r_0}(t_1)$ meet only at $r_0$, so putting them in neighbouring layers creates exactly the desired joining edge. An $I$ path and an $O$ path with different roots and different tips are disjoint, so their copies have no cross-layer edge.

% Source Tab 5 block 38
Precisely, writing $V(R)$ for the set of vertices in a path $R$, our finite paths satisfy

\begin{equation}\label{eq:intersection}
 V(I_r(t))\cap V(O_s(t'))
 =\bigl(\{r\}\text{ if }r=s\bigr)\ \cup\
   \bigl(\{t\}\text{ if }t=t'\bigr).
\end{equation}

% Source Tab 5 block 40
If both endpoints agree, the intersection consists of those two endpoints. Routes in the same family may overlap extensively: we will put them in layers that cannot be neighbours.

% Source Tab 5 block 41
Table 3 gives the vertex counts of the supplied paths. Their average is 371,401.2, about $35.42\%$ of the $2^{20}$ vertices in a layer. Using paths of roughly this size throughout the larger cube suggests the same occupied fraction there. We next prove that the copies can be joined safely.

\begin{table}[H]
\centering\small
\begin{tabular}{rrrrr}
\toprule
Tip & $|I_{r_0}|$ & $|I_{r_1}|$ & $|O_{r_0}|$ & $|O_{r_1}|$\\
\midrule
$t_0$ & 371,524 & 371,411 & 371,518 & 371,431\\
$t_1$ & 371,540 & 371,427 & 371,466 & 371,379\\
$t_2$ & 371,483 & 371,370 & 371,485 & 371,398\\
$t_3$ & 371,429 & 371,316 & 371,447 & 371,360\\
$t_4$ & 371,396 & 371,283 & 371,224 & 371,137\\
\bottomrule
\end{tabular}
\caption{% Source Tab 5 block 43
The twenty certified route sizes.}
\end{table}

\subsection{Joining copies across the layers}
% Source Tab 5 block 45
First consider $Q_{21}$, which has just two layers. Put $I_r(t_0)$ in one and $O_r(t_0)$ in the other. They share the root $r$ and tip $t_0$, so their copies have two cross-layer edges. Delete $t_0$ from both paths. The remaining paths meet across the layers only at $r$ and form a snake. If their original sizes sum to $V$, the snake has $V-2$ vertices and $V-3$ edges.

% Source Tab 5 block 46
In general, write $N=2^{m-1}$ and visit them in binary reflected Gray order,

\begin{equation}\label{eq:grayorder}
g(i)=i\xor\lfloor i/2\rfloor,\qquad 0\le i<2^m=2N,
\end{equation}

% Source Tab 5 block 48
where $\oplus$ is bitwise exclusive-or. For example, with three outer bits the order is

\[
000,001,011,010,110,111,101,100.
\]

% Source Tab 5 block 50
Each step changes one bit, so successive layers are neighbours. There are also neighbouring layers that are not consecutive in this order, and we must prevent unwanted crossings between them.

% Source Tab 5 block 51
Put an $I$ route in every even-indexed layer and an $O$ route in every odd-indexed layer. Every neighbouring pair of layers therefore contains opposite families, whose shared inner vertices are exactly the roots and tips specified by (1).

\textbf{The roots.} % Source Tab 5 block 52
Join layers at positions 0 and 1 at $r_0$, positions 2 and 3 at $r_1$, positions 4 and 5 at $r_0$, and continue alternating. This rule has a useful description directly in the outer bits: use $r_0$ when the number of ones excluding the rightmost bit is even, and $r_1$ when it is odd. Indeed, deleting the rightmost bit of $g(i)$ gives $g(\lfloor i/2\rfloor)$, whose parity is $\lfloor i/2\rfloor\bmod2$.

% Source Tab 5 block 53
Flipping the rightmost outer bit preserves the root, and these edges are exactly the intended root joins. Flipping any other outer bit changes the root. Thus two neighbouring layers share a root only when we mean to join them there.

\textbf{The tips.} % Source Tab 5 block 54
The remaining joins connect positions 1 and 2, positions 3 and 4, and so on. Call these tip pairs 1, 2, …, $N-1$. Pair $x$ consists of layers $g(2x-1)$ and $g(2x)$, which must both use the same tip. Write $c(x)\in\{0,\ldots,4\}$ for the index of this tip, so pair $x$ uses $t_{c(x)}$. The complete sequence of joins is

\[
r_0,\ t_{c(1)},\ r_1,\ t_{c(2)},\ r_0,\ t_{c(3)},\ldots.
\]

% Source Tab 5 block 56
The first and last layers have unused boundary tips. We choose $t_0$ for both and delete those two tip vertices, as in the two-layer example.

\begin{figure}[H]
\centering
\includegraphics[width=\textwidth]{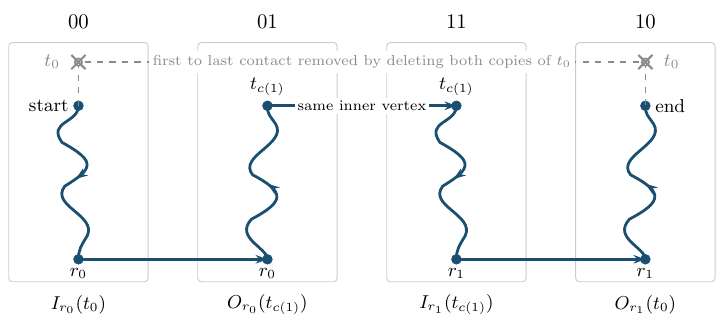}
\caption{% Source Tab 5 block 58
Four layers, joined at $r_0$, $t_{c(1)}$ and $r_1$. Each box is a whole $Q_{20}$ layer and each curve a long route. Deleting the two copies of $t_0$ gives a snake.}
\end{figure}

\subsection{Which pairs can reuse a tip}
% Source Tab 5 block 60
If we reuse a tip in another pair, neither layer of that pair may neighbour either layer of the earlier pair. Otherwise the two copies of the tip have identical inner bits and differ in just one outer bit, creating an extra crossing. This restriction is captured by a graph $H$:

\begin{enumerate}
\item % Source Tab 5 block 61
Draw the outer cube, with a vertex for each layer and every edge between neighbouring layers, including edges not used by the Gray traversal.

\item % Source Tab 5 block 62
Replace each intended tip pair by a single vertex, labelled by its position $x$. Omit the first and last layers from this auxiliary graph because their tips are deleted.

\item % Source Tab 5 block 63
Join two different vertices $x,y$ whenever any layer in pair $x$ neighbours any layer in pair $y$. Several contacts between two pairs yield just one edge of $H$.

\end{enumerate}
% Source Tab 5 block 64
Thus an edge of $H$ means that its two tip pairs must receive different tips. This includes contacts at intended root joins (reusing a tip there would create a second crossing alongside the root crossing).

\begin{samepage}
% Source Tab 5 block 65
Choosing tips now means splitting the vertices of $H$ into five groups, one per available tip, with no edge inside any group. Such a group is an independent set (the partition is a proper colouring).

\end{samepage}
% Source Tab 5 block 66
For example, when $m=4$ there are sixteen layers and seven tip pairs. A valid assignment of the five tips is

\[
\begin{array}{c|ccccc}\text{tip}&t_0&t_1&t_2&t_3&t_4\\\hline\text{pair positions}&1&2,5&4,7&3,6&\text{none}\end{array}
\]

% Source Tab 5 block 68
To see why pairs 2 and 5 can share tip $t_1$, their outer labels are

\[
\begin{array}{ccl}\text{pair 2:}&0010\;\longleftrightarrow\;0110,\\\text{pair 5:}&1101\;\longleftrightarrow\;1111.\end{array}
\]

% Source Tab 5 block 70
Within each pair one bit changes and between pairs every comparison differs in at least two bits. The desired crossings exist, with no additional crossing caused by reusing $t_1$.

We choose these paths as follows:
\begin{equation}\label{eq:layers}
\begin{array}{c|l}
\text{layer}&\text{path}\\\hline
 g(0)&I_{r_0}(t_0)\\
 g(2x-1)&O_{r_{(x-1)\bmod2}}(t_{c(x)})\\
 g(2x)&I_{r_{x\bmod2}}(t_{c(x)})\\
 g(2N-1)&O_{r_{(N-1)\bmod2}}(t_0).
\end{array}
\end{equation}
The middle two rows apply for $1\le x<N$; when $N=1$ they are absent.
Delete the two boundary copies of $t_0$ and join successive paths at their common terminals by changing one outer bit.

\begin{lemma}\label{lem:assembly}
% Source Tab 5 block 75
A proper colouring of $H$ makes the assembled path induced.

\end{lemma}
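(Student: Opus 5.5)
The plan is to show directly that the subgraph of $Q_d$ induced by the chosen vertex set is a single path. Let $P_i$ be the route placed in layer $g(i)$ by \eqref{eq:layers}, with $t_0$ deleted from $P_0$ and from $P_{2N-1}$. The set of chosen vertices is the disjoint union of the copies $P_i\times\{g(i)\}$. Every edge of $Q_d$ either stays inside one layer or changes exactly one outer bit and keeps the inner vertex. So the edges of the induced graph come in two kinds: edges inside a single layer, and cross-layer edges between layers $g(i),g(j)$ that neighbour in $Q_m$, one for each vertex $v\in V(P_i)\cap V(P_j)$.

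\textbf{Within a layer.} Each route is induced in $Q_{20}$, and deleting an endpoint keeps a path induced. Hence each layer contributes exactly the edges of its own path.

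\textbf{Across layers.} Take neighbouring layers $g(i),g(j)$. They differ in one bit, so $i$ and $j$ have opposite parity, since Gray codes of equal parity have equal popcount parity. One route is therefore an $I$ route and the other an $O$ route. By \eqref{eq:intersection}, their common vertices are only their shared root and their shared tip. For the \emph{root}, the parity description already proved shows that two neighbouring layers carry the same root exactly when they differ in the rightmost outer bit. Those are the pairs $\{g(2k),g(2k+1)\}$, which are the intended root joins. For the \emph{tip}, the boundary layers have no tip left after deletion. Every other layer has its tip determined by its tip pair $x$. If the two layers belong to the same pair, the shared tip is the intended join. If they belong to different pairs $x\ne y$, then $x,y$ are adjacent in $H$ by construction, and the proper colouring gives $c(x)\ne c(y)$. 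So no tip is shared.

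Counting these, the cross-layer edges are exactly the $2N-1$ joins between consecutive positions $i,i+1$. Each is a single edge, because consecutive routes share exactly one of the root or the tip, never both. This holds because layers $2k,2k+1$ have tips from different pairs, or a deleted tip, and layers $2x-1,2x$ have roots $r_{(x-1)\bmod 2}\ne r_{x\bmod 2}$.

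To finish, I check that each join edge attaches at the correct ends. In \eqref{eq:layers}, $P_i$ ends at the vertex where $P_{i+1}$ starts, because of the traversal directions $I_r(t):t\to r$ and $O_r(t):r\to t$ and the index bookkeeping. The induced graph is then the concatenation $P_0,P_1,\dots,P_{2N-1}$ with exactly these connecting edges, which is a path.

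The main obstacle is the middle step: neighbouring but non-consecutive layers must share nothing. The root case rests on the parity-of-bits characterisation, and the tip case on $H$ capturing every contact, including contacts between pairs at root-join positions. I also need to state carefully that boundary layers are excluded from $H$ only because their tips are deleted, and that their roots are still handled by the root argument.
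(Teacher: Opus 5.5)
Your proposal is correct and follows essentially the same route as the paper's proof. Routes are induced within their layers, and cross-layer contacts are limited by \eqref{eq:intersection} to roots and tips: roots are handled by the rightmost-bit parity rule, and tips by the proper colouring of $H$ together with the deleted boundary tips. You also make explicit three points the paper leaves implicit: the opposite-parity argument that puts an $I$ route beside every $O$ route, the check that each consecutive join contributes exactly one edge, and the endpoint bookkeeping.
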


\begin{proof}
% Source Tab 5 block 76
The routes are induced within their layers. Between neighbouring layers, the intersection rule (1) permits contacts only at a shared root or tip. The root rule permits exactly the intended root joins. Within each tip pair, the roots differ and the shared tip supplies exactly one crossing. Between different tip pairs, an outer adjacency gives an edge of $H$, so their tips differ and create no crossing. Boundary tips have been deleted, leaving only the intended root contact at each boundary layer. These cases account for every possible edge, so the joined path is induced.

\end{proof}

% Source Tab 5 block 77
Note. If $V$ is the sum of the untrimmed route sizes then the path has $V-3$ edges.

\subsection{Why five tips suffice}
% Source Tab 5 block 79
Our construction uses the colouring approach described by Evdokimov [5], based on his earlier joint work with Glagolev [6]. We give a self-contained colouring argument for the precise conflict graph required here.

% Source Tab 5 block 80
We first express the geometric definition of $H$ as an arithmetic test. For a positive integer $n$, let $k$ be the largest nonnegative integer for which $2^k$ divides $n$, and set $\lambda(n)=2^k$. Thus $\lambda(n)$ is the largest power of two dividing $n$; for example, $\lambda(12)=4$ and $\lambda(13)=1$. In standard notation it is $2^{\nu_2(n)}$. The test is

\begin{equation}\label{eq:H}
 x\sim y\quad\Longleftrightarrow\quad x\ne y\ \text{ and }\ |x-y|\le\lo(x+y).
\end{equation}

% Source Tab 5 block 82
Here $x,y$ are tip-pair positions, not outer bit strings or inner tip vertices.

% Source Tab 5 block 83
To derive the test, invert the Gray labelling by taking cumulative exclusive-or from the highest bit downwards. Changing one Gray bit complements a suffix of the binary index. Hence neighbouring layers have indices with the same prefix and complementary final $k$ bits, for some $k\ge1$. They lie in the same aligned block of $2^k$ integers; their sum plus one is an odd multiple of $2^k$, and their difference is less than $2^k$. Conversely, those two conditions put the indices on opposite sides of the block’s midpoint with complementary suffixes. Therefore

\begin{equation}\label{eq:gray}
 g(i)\sim g(j)\quad\Longleftrightarrow\quad |i-j|<\lo(i+j+1)
 \qquad(i\ne j).
\end{equation}

% Source Tab 5 block 85
For distinct tip pairs $x,y$, only opposite-parity layer indices can be neighbours. Substituting $(i,j)=(2x-1,2y)$ or $(2x,2y-1)$ gives

\[
|2(x-y)-1|<2\lo(x+y)\quad\text{or}\quad|2(x-y)+1|<2\lo(x+y).
\]

% Source Tab 5 block 87
Since $x-y$ is an integer, their union is exactly (4).

% Source Tab 5 block 88
We now colour this graph with five labels. Assigning a different actual tip to each label gives the required tip groups.

\begin{lemma}\label{lem:colours}
% Source Tab 5 block 89
The graph $H$ has an explicit proper colouring with five labels.

\end{lemma}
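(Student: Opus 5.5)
The plan is to use the arithmetic description (4) of $H$ and to exploit a self-similarity in it. The colouring can then be defined recursively on the $2$-adic valuation, and made explicit by unrolling the recursion. The key first step is a structural decomposition of $H$ by parity.

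\emph{Mixed parity.} If $x$ and $y$ have opposite parity, then $x+y$ is odd, so $\lo(x+y)=1$. Hence $x\sim y$ only when $|x-y|=1$.

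\emph{Both even.} If $x=2x'$ and $y=2y'$, then $|x-y|=2|x'-y'|$ and $\lo(x+y)=2\lo(x'+y')$. So $x\sim y$ if and only if $x'\sim y'$. The even vertices therefore induce an exact copy of $H$ under $x\mapsto x/2$.

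\emph{Both odd.} Write $x=2u+1$ and $y=2v+1$. Then $x\sim y$ if and only if $|u-v|\le\lo(u+v+1)$. This is (up to $\le$ versus $<$) the Gray-neighbour relation (5) on the $u$'s, so the odd part is a concrete, almost Gray-cube graph.

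So $H$ consists of three pieces: an ``odd layer'' $O$, a rescaled copy of $H$ on the evens, and the path edges $x\sim x+1$. The path edges are the only links between odd and even vertices.

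The colouring will be built in two stages.
\begin{enumerate}
\item Colour $O$ with a small fixed palette, aiming for three labels, by a rule depending only on $x\bmod 2^a$ for a small $a$ (I would try $a=3$). To justify it, I would show that odd neighbours satisfy $|x-y|\in\{2,4,6,\dots\}$ with strong congruence restrictions. For instance, $x\sim x+2$ always holds, while $x\sim x+4$ never holds for odd $x$. These restrictions should force a periodic pattern: I would check that every adjacency in $O$ joins residues that receive different labels, by showing that adjacency at difference $2^j\cdot(\text{odd})$ forces a specific residue relation.
\item Colour a vertex $x=2^k(2q+1)$ by the $O$-colouring of $2q+1$, passed through a permutation $\pi_k$ of the five labels. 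Within valuation level $k$, the rescaling isomorphism means edges are exactly the $O$-edges of the odd parts, so any choice of $\pi_k$ handles them. Between levels, the only edges come through the path edges $x\sim x\pm1$ at every scale. Each level-$k$ vertex $2^k(2q+1)$ sees level $k'>k$ only at $2^k(2q+1)\pm 2^k$, whose odd part is determined. So I would choose the $\pi_k$ cyclically (e.g.\ shifting by $2$ modulo $5$ with $k$) and $q$-dependently so that these two cross-level neighbours avoid the vertex's label.
\end{enumerate}
As a sanity check, I would compare the resulting formula against the $m=4$ assignment displayed above (pairs $1,\dots,7$). I would also machine-check it on all $x,y<2^{12}$ before writing the general case analysis.

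The main obstacle is the cross-level interaction in stage 2. A vertex $x$ can have unboundedly many smaller neighbours: for $x=2^n-1$ every $y=2^n+1-2^j$ qualifies. So a naive degeneracy or greedy argument with five colours fails, and the proof must genuinely use that these many neighbours lie at distinct valuation levels with controlled odd parts. My first attempt is to prove a lemma. It asserts that for a fixed $x$ and a fixed level $k\ne\nu_2(x)$, at most one or two neighbours of $x$ have valuation $k$. These neighbours arise from the unique residue $d\equiv 2x\pmod{2^j}$ with $0<d\le2^j$, which comes from the condition $\lo(2x-d)\ge d$. The lemma further asserts that their odd parts lie in a fixed residue class modulo $8$.

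With the lemma in hand, the labels forbidden to $x$ by other levels are pinned down. The remaining step is then a finite case check modulo $8$ and modulo the period of $\pi_k$. If this check fails with a period-$5$ shift, I would instead let $\pi_k$ depend on $k\bmod 2$ together with $q\bmod 2$, and redo the finite verification.
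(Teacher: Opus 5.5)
\textbf{Gap: the proposed level-by-level colouring cannot be proper for all $N$.}

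Your parity facts are right. In fact $|u-v|=\lambda(u+v+1)$ is impossible by parity, so the odd layer is exactly a Gray-ordered hypercube, $2$-coloured by $x\bmod 4$. You are also right that $x=2^l a$, with $a$ odd, meets higher levels only at $x\pm2^l$. The gap is in the other direction, and it rules out the colouring you propose, not just its verification.

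For every $k<l$ the vertex $2^la$ is adjacent to both
\[
 2^k\bigl(2^{l-k}a-1\bigr)\qquad\text{and}\qquad 2^k\bigl(2^{l-k}a+1\bigr).
\]
Each is at distance $2^k$, and $\lambda$ of the sum is $2^k$. These two vertices are adjacent to each other, because their odd parts differ by $2$. So \emph{every} lower level forbids two distinct colours at $x$; the two odd parts lie in the classes $\pm1$ modulo $2^{l-k}$, not in a single class. This is where unbounded cross-level degree really occurs: $2^n$ is adjacent to all $2^n\pm2^j$ with $j<n$. By contrast, the many neighbours in your example $2^n-1$ are all odd.

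Now suppose, as a finite case check requires, that $2^kb$ ($b$ odd) receives colour $F(k,b\bmod 2^s)$ for a fixed $s\ge2$. Then $P_k=\{F(k,1),F(k,-1)\}$ has two elements, since $2^k(2^s-1)\sim 2^k(2^s+1)$. Applying the above with $a\equiv\pm1\pmod{2^s}$ gives $P_l\cap P_k=\emptyset$ whenever $l-k\ge s$. Once $N\ge 2^{3s+1}$, the pairs $P_0,P_s,P_{2s}$ are therefore pairwise disjoint and need six colours, however the $\pi_k$ are chosen.

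If $F$ depends on $k$ only modulo a period $p$, it fails sooner. For $l\ge s$ with $p\mid l$, the adjacent vertices $2^l$ and $2^l+1$ both receive $F(0,1)$. For instance, $32$ and $33$ clash under your period-$5$, modulo-$8$ scheme, and $16$ and $17$ clash under the period-$2$ fallback.

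\textbf{What is missing.} You need a mechanism that keeps the neighbours $M\pm 2^j$ of a high-valuation vertex $M$ inside four colours. By the argument above, this forces the colour of $2^kb$ to depend on arbitrarily many binary digits of $b$.

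The paper achieves this by reflection. It extends a colouring of $1,\dots,M-1$ to $1,\dots,2M-1$ by copying $x\mapsto 2M-x$ under a fixed permutation of the five colours. Reflection preserves $H$ within a half, and only the mirror pairs $M\pm a$ are joined across the halves. Throughout, it maintains an invariant: positions at power-of-two distance from the two ends of the block use palettes $\{A,B,C\}$ and $\{A,D,E\}$. The middle's neighbours then use only $\{A,C,D,E\}$, leaving $B$ for $M$. The palettes are then restored as $\{A,B,C\}$ and $\{B,D,E\}$.

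Your level decomposition would need an invariant of this global kind. As it stands, it does not yield a colouring.
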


The proof is in Appendix~\ref{app:colouring}.

\subsection{Counting the length}
% Source Tab 5 block 101
We may change which actual tip is assigned to each colour, provided we change the assignment throughout the construction. This preserves validity however can change the length, because the twenty routes have different sizes. We compare the five cyclic assignments of tips to colours, each with both root orders: ten complete valid constructions. At least one is as long as their average. These choices cyclically shift $c(x)$ modulo five and exchange the roots; the boundary tip stays fixed at $t_0$.

% Replace the instruction at block 102, preserving the sentence in block 103.
For $0\le j<5$, define $S_j$ as the sum of the four path sizes for tip $t_j$:
\[
 S_j:=|I_{r_0}(t_j)|+|I_{r_1}(t_j)|
     +|O_{r_0}(t_j)|+|O_{r_1}(t_j)|.
\]
The total of all twenty path sizes is $T:=\sum_{j=0}^4 S_j$ ($T=7{,}428{,}024$, $S_0=1{,}485{,}884$).
Let $L$ be the snake’s length in edges. Count them from start to finish. The first path loses its boundary tip, leaving $|I_{r_0}(t_0)|-2$ edges. Entering each interior path adds one crossing edge and its internal edges, hence adds its vertex count. The last path also loses its boundary tip, so entering and traversing it adds its vertex count minus one. Thus
\begin{equation}\label{eq:counttotal}
\begin{aligned}
 L={}&|I_{r_0}(t_0)|-2\\
 &+\sum_{x=1}^{N-1}\bigl(|O_{r_{(x-1)\bmod2}}(t_{c(x)})|
                      +|I_{r_{x\bmod2}}(t_{c(x)})|\bigr)\\
 &+|O_{r_{(N-1)\bmod2}}(t_0)|-1.
\end{aligned}
\end{equation}
Now average over the ten choices, with lengths $L_1,\ldots,L_{10}$. The first term has mean
\[
 \frac{|I_{r_0}(t_0)|+|I_{r_1}(t_0)|}{2}-2.
\]
At each interior tip pair, cycling the tips and swapping the roots uses each of the twenty paths once, giving mean $T/10$. Finally, the last term has mean
\[
 \frac{|O_{r_0}(t_0)|+|O_{r_1}(t_0)|}{2}-1.
\]
The first and last means sum to $S_0/2-3$. Consequently
\begin{equation}\label{eq:countmean}
 \max_q L_q\ge
 (N-1)\frac{T}{10}+\frac{S_0}{2}-3
 =2^m\frac{1857006}{5}+\frac{683}{5}.
\end{equation}
Substituting $m=d-20$ gives
\[
\begin{aligned}
\max_q L_q
&\ge 2^{d-20}\frac{1857006}{5}+\frac{683}{5}\\
&=\frac{928503}{2621440}\,2^d+\frac{683}{5}
>\frac{17}{48}\,2^d.
\end{aligned}
\]
Since the length is an integer, rounding up proves Theorem~1.

\subsection{Closing the construction into coils}
% Revised from Tab 7 instructions.
To extend our snake construction to coils, we reserve a sixth tip, $t_5$, solely for joining the ends. The supplement supplies four verified paths to $t_5$ in $Q_{20}$, satisfying (1) simultaneously with the twenty original paths. Their sizes, in the column order of Table 3, are 371,397, 371,284, 371,067 and 370,980 vertices. Let $S_5$ be their total size ($S_5=1,484,728$).

\begin{corollary}\label{cor:coils}
% Source Tab 5 block 116
For every integer $d\ge21$, $b(d)\ge(17/48)2^d$.

\end{corollary}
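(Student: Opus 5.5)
The plan is to close the snake of Theorem~1 into a coil by giving the two boundary layers the reserved tip $t_5$ instead of $t_0$, and then joining those two boundary layers at $t_5$. In the Gray order, $g(0)=00\cdots0$ and $g(2N-1)=10\cdots0$ differ only in the leftmost outer bit, so they are neighbouring layers. The layer assignment (3) is kept, except that the first layer carries $I_{r_0}(t_5)$ and the last carries $O_{r_{(N-1)\bmod 2}}(t_5)$. No tip vertices are deleted. The cross-layer edge between the two copies of $t_5$ then closes the path into a cycle.

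To show this cycle is chordless, I would rerun the case analysis in the proof of Lemma~1.
\begin{itemize}
\item Routes are induced inside their layers.
\item Between neighbouring layers, contacts occur only at a shared root or tip, by (1). Here (1) is used in its extended form, which the supplement guarantees holds simultaneously for the twenty-four paths.
\item Root contacts are exactly the intended root joins. The root rule depends only on the outer bits and is unchanged.
\item Interior tip pairs use tips $t_0,\dots,t_4$ via the proper colouring of Lemma~2. They never meet $t_5$.
\item The new tip $t_5$ appears only in layers $g(0)$ and $g(2N-1)$, and these are joined there on purpose. Those two layers carry opposite families and share $t_5$, so exactly one crossing arises between them.
\end{itemize}
One more check is needed for the pair $g(0),g(2N-1)$: do they also share a root? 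Their roots are $r_0$ and $r_{(N-1)\bmod 2}$. For $N$ even (that is, $m\ge2$) these differ. This matches the parity rule, since flipping a non-rightmost outer bit changes the root. For $m=1$ ($N=1$), the two layers share both $r_0$ and $t_5$, which gives the intended 2-cycle of joins closing a cycle. That is still fine: it is a coil built from two paths meeting at both endpoints. I would handle this as a separate small case, or simply note that $d=21$ already works.

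For the count, the coil length in edges equals its vertex count, namely the sum of all route sizes. Because nothing is trimmed, each crossing edge just links consecutive routes. As in Section~4.5, I would average over the ten cyclic tip assignments and root swaps, keeping $t_5$ fixed at the boundary. The interior pairs contribute $(N-1)T/10$, as before. The boundary contributes the means $(|I_{r_0}(t_5)|+|I_{r_1}(t_5)|)/2$ and $(|O_{r_0}(t_5)|+|O_{r_1}(t_5)|)/2$, which sum to $S_5/2$. Hence
\[
b(d)\ge 2^m\frac{T}{20}+\frac{S_5}{2}-\frac{T}{10}.
\]
Here $T/20=371401.2$, $S_5/2=742364$ and $T/10=742802.4$, so
\[
b(d)\ge 371401.2\cdot 2^{m}-438.4.
\]

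The remaining step compares this with $(17/48)2^{20+m}=371370.67\cdot 2^m$. The margin is about $30.5\cdot 2^m$, which exceeds $438.4$ once $2^m\ge 16$, so the bound holds for $m\ge4$. The main obstacle is the small cases $m=1,2,3$, where the margin is insufficient. For these I would first try choosing the best assignment directly rather than the average, or invoking the verified coil in Table~2 via a simple lift. Failing that, I would compute the explicit maximum over the ten constructions for $d=21,22,23$ using the sizes in Table~3.
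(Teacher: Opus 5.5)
For $m\ge4$ your argument matches the paper's. You replace the boundary routes by $I_{r_0}(t_5)$ and $O_{r_1}(t_5)$; your $r_{(N-1)\bmod 2}$ is $r_1$ because $N$ is even. You retain both copies of $t_5$ to close the cycle, and you exclude other contacts using (\ref{eq:intersection}) for all twenty-four paths together with the root rule. Your average $(N-1)T/10+S_5/2=371401.2\cdot2^m-438.4$ beats $(17/48)2^d$ once $d\ge24$.

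The genuine gap is $d=21,22,23$. You flag these cases but do not settle them, and your first fallback fails for two of them. Within the $t_5$ scheme, even the best single choice, not just the average, is too short:
\begin{itemize}
\item For $d=21$ the best is $|I_{r_0}(t_5)|+|O_{r_0}(t_5)|=742{,}464$, below $(17/48)2^{21}\approx742{,}741.3$.
\item For $d=22$ the best is $742{,}377+742{,}929=1{,}485{,}306$, below $(17/48)2^{22}\approx1{,}485{,}482.7$. This uses interior tip $t_0$, and either root order gives the same total.
\item Only for $d=23$ does a best choice work: interior tips $t_0,t_1,t_2$ give $2{,}971{,}080$.
\end{itemize}
Lifting a Table~2 coil is no substitute. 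The lift of Section~3 joins two paths meeting at one endpoint and produces a snake. A coil of about twice the length in $Q_{21}$ needs two induced paths in $Q_{20}$ meeting exactly at both endpoints, and a single $Q_{20}$ coil does not supply them.

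The missing idea is that for $m\le3$ no sixth tip is needed. The conflict graph $H$ has at most three vertices; for $N=4$ it is a triangle. So the interior needs at most three tips, which leaves $t_0$ free to be retained at both ends as the closing tip. Its total $S_0$ is the largest of all six tips. This is what the paper does:
\begin{itemize}
\item in $Q_{21}$ it uses $I_{r_0}(t_0)$ and $O_{r_0}(t_0)$;
\item in $Q_{22}$ it uses interior tip $t_1$;
\item in $Q_{23}$ it uses interior tips $t_1,t_2,t_3$.
\end{itemize}
Averaging the two root orders gives $743{,}042$, then $(S_0+S_1)/2=1{,}485{,}848$, then $(S_0+S_1+S_2+S_3)/2=2{,}971{,}492$. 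Each is above $(17/48)2^d$ in its dimension.
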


The proof is in Appendix~\ref{app:coil-proof}.

\section{Verification and the remaining gap}
% Source Tab 5 block 124
The public repository (https://github.com/tommyet/snakes) supplies the twenty snake paths and four additional coil paths, expandable to vertex lists, with a verifier.

% Source Tab 5 block 125
Huang proved that more than half the vertices of $Q_d$ always induce a vertex of degree at least $\sqrt d$ [13]. A snake has a maximum degree of two. Hence $a(d)\le2^{d-1}-1$ for $d\ge5$, and in particular

\[
371,711\ \le\ a(20)\ \le\ 524,287.
\]

% Source Tab 5 block 127
Whilst our snake reaches $70.90\%$ of this upper bound, the optimum remains unknown. I expect short-timelines to lead to frequent finding of ‘record snakes’. Longer compatible routes in $Q_{20}$ would improve the general bound without changing the joining proof. The leading coefficient is the mean size of the twenty routes divided by $2^{20}$.

% Source Tab 5 block 128
\paragraph{AI assistance.}
An LLM coding assistant (OpenAI’s Astra) supported the search and programming under the author’s direction.

\clearpage
\section*{Appendices}
\appendix
\section{Endpoint pivots}
\label{app:pivot}
\begin{proposition}
% Source Tab 5 block 145
Let $P=(p_0,\ldots,p_L)$ be induced. Suppose an unused vertex $v$ has exactly two neighbours in $V(P)$, namely $p_j,p_L$, with $0\le j\le L-2$. Then

\[
(p_0,\ldots,p_j,v,p_L,p_{L-1},\ldots,p_{j+2})
\]

% Source Tab 5 block 147
is an induced path of the same length $L$.

\end{proposition}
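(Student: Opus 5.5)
Write $P'=(q_0,\ldots,q_L)$ for the proposed sequence, so that $q_i=p_i$ for $i\le j$, $q_{j+1}=v$, and $q_{j+1+k}=p_{L+1-k}$ for $1\le k\le L-j-1$; the last entry is $q_L=p_{j+2}$. I will first check that this is a path with $L$ edges, and then show it has no chord, splitting into pairs that avoid $v$ and pairs that involve $v$.

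\textbf{Counting and distinctness.} The sequence lists $p_0,\ldots,p_j$, then $v$, then $p_L,\ldots,p_{j+2}$. That is $(j+1)+1+(L-j-1)=L+1$ vertices, so the length is $L$ edges. The only vertex of $P$ left out is $p_{j+1}$. The vertices are distinct because $v$ is unused and the listed $p_i$ are distinct.

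\textbf{Consecutive adjacencies.} I check each consecutive pair in turn:
\begin{itemize}
\item $p_i p_{i+1}$ for $i<j$ are edges of $P$.
\item $p_j v$ and $v p_L$ are edges by hypothesis.
\item $p_{i+1}p_i$ for $j+2\le i<L$ are edges of $P$.
\end{itemize}

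\textbf{No chords among the old vertices.} Take two vertices of $P'$ other than $v$, say $p_a,p_b$ with $a<b$ and $a,b\ne j+1$. Because $P$ is induced, they are adjacent exactly when $b=a+1$. Such a pair is consecutive in $P'$ whenever both indices are at most $j$, or both are at least $j+2$. The only other candidates are $a=j$ with $b=j+1$, and $a=j+1$ with $b=j+2$, and both involve the deleted vertex $p_{j+1}$. So no chord arises here.

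\textbf{No chords through $v$, and the main obstacle.} The neighbours of $v$ among $V(P)$ are exactly $p_j$ and $p_L$, and both sit next to $v$ in $P'$. Deleting $p_{j+1}$ cannot add adjacencies, so no chord involves $v$. The one point needing care is the boundary case $j=L-2$. There the tail is just $p_L$, and $p_{j+2}=p_L$ is both the neighbour of $v$ and the new endpoint. This is consistent, and the path ends $p_j,v,p_L$. The condition $j\le L-2$ guarantees that $p_j$ and $p_L$ are distinct and non-adjacent, so $v$ genuinely closes a cycle $p_j,\ldots,p_L,v$ from which $p_{j+1}$ is cut. With chords ruled out in both cases, $P'$ is an induced path of length $L$.
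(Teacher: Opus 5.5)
Your proof is correct and takes the same route as the paper's: delete $p_{j+1}$, insert $v$, and reverse the retained suffix. Then check that consecutive vertices are adjacent, that the retained old vertices gain no extra edges because $P$ was induced, and that $v$ meets only its two prescribed neighbours $p_j,p_L$. You also spell out the index bookkeeping and the boundary case $j=L-2$, which the paper's three-line proof leaves implicit.
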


\begin{proof}
% Source Tab 5 block 148
Delete $p_{j+1}$, add $v$ and reverse the retained suffix. Consecutive vertices remain adjacent. The retained old vertices have no extra edges, and $v$ has only its two prescribed neighbours. The vertex count is unchanged.

\end{proof}

% Source Tab 5 block 149
The new endpoint is $p_{j+2}$. A previously unused vertex with exactly one neighbour on the new path, at that endpoint, can now extend it. The pivot itself adds nothing; it changes which extension is possible.

\begin{figure}[H]
\centering
\includegraphics[width=\textwidth]{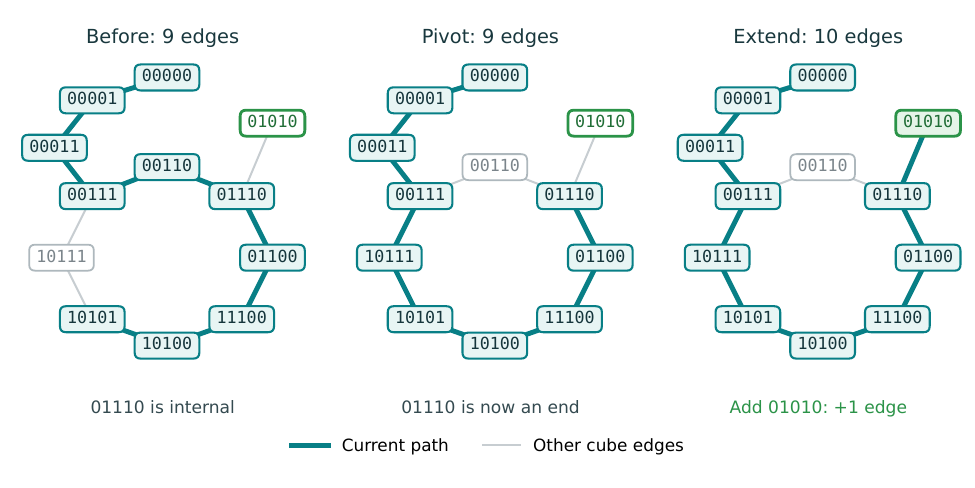}
\caption{% Source Tab 5 block 151
An endpoint pivot in $Q_5$. Every label gives the vertex’s five binary coordinates. Replacing $00110$ by $10111$ preserves nine edges and makes $01110$ an endpoint, allowing $01010$ to extend the snake to ten edges. Neither original endpoint has a legal extension anywhere in $Q_5$.}
\end{figure}

\clearpage
\section{Five-tip colouring}
\label{app:colouring}

\begin{proof}
Recall that distinct positions $x,y$ are joined in $H$ exactly when
\[
|x-y|\le\lambda(x+y),
\]
where $\lambda(n)$ is the largest power of two dividing $n$. We will use the identity
\[
\lambda(Q\pm r)=\lambda(r)\qquad\text{for }0<r<Q,
\]
whenever $Q$ is a power of two.

We construct the colouring by induction, extending a block of positions $1,\ldots,M-1$ to $1,\ldots,2M-1$, where $M$ is a power of two.

The new middle vertex will have neighbours precisely at power-of-two distances. We therefore maintain an additional condition on the colours at these boundary positions, ensuring that a colour remains available for the middle vertex.

Alongside a proper colouring, we maintain the following condition: the positions at distances $1,2,4,\ldots$ from the two boundaries use two permitted palettes of three colours, sharing exactly one colour. For the current step, name these palettes so that
\[
\begin{array}{c|c}
\text{positions}&\text{permitted colours}\\\hline
1,2,4,\ldots,M/2&A,B,C\\
M-1,M-2,M-4,\ldots,M/2&A,D,E.
\end{array}
\]
Not every permitted colour needs to occur and the boundaries $0,M$ themselves are outside the block.

For $M=2$, colour the single position $1$ with $A$. Both conditions trivially hold.

For the induction step, keep the old block on the left and reflect a copy across the new middle position $M$. Thus position $x$ is copied to $2M-x$. Give the middle colour $B$, and recolour the reflected copy according to
\[
\begin{array}{c|ccccc}
\text{old colour}&A&B&C&D&E\\\hline
\text{new colour}&D&E&B&A&C.
\end{array}
\]

We check the three possible types of edge:
\begin{enumerate}
\item \textbf{Edges within a half.} The left half is unchanged. Reflection preserves the conflict test: it leaves $|x-y|$ unchanged and replaces $x+y$ by $4M-(x+y)$, which has the same value of $\lambda$. The reflected half therefore has exactly the old conflict pattern. Since the recolouring is a permutation of the colours, it remains properly coloured.

\item \textbf{Edges between the halves.} Write the two positions as $M-a$ and $M+b$, with $1\le a,b<M$. If $a\ne b$, then
\[
\lambda(2M+b-a)=\lambda(|b-a|)
\le |b-a|<a+b,
\]
so they are not joined. If $a=b$, their sum is $2M$, and their distance $2a$ is less than $\lambda(2M)=2M$, so they are joined. Thus only mirror positions conflict. Every colour changes under our recolouring, so every mirror pair receives different colours.

\item \textbf{Edges at the middle.} A position $M\pm a$, with $1\le a<M$, is joined to $M$ exactly when
\[
a\le\lambda(2M\pm a)=\lambda(a),
\]
which holds precisely when $a$ is a power of two. The left neighbours of the middle are therefore the old right-boundary positions, whose colours lie in $\{A,D,E\}$. The right neighbours are their reflected copies, whose colours lie in $\{D,A,C\}$. Neither palette contains the middle's colour $B$.
\end{enumerate}

The enlarged graph is therefore properly coloured.

It remains to check the boundary condition, so that the construction can be repeated. At the new left boundary, the special positions are the old ones together with the middle $M$; their colours still lie in $\{A,B,C\}$. At the new right boundary, the special positions are reflections of the old left-boundary positions, again together with the middle. Their colours lie in $\{D,E,B\}$.

The new boundary palettes are therefore $\{A,B,C\}$ and $\{B,D,E\}$. Crucially, these are two three-colour palettes sharing exactly one colour. The induction condition is restored, now with $B$ as the shared colour.

We can consequently repeat the construction for blocks of every required size, using the same five colours.
\end{proof}

\clearpage
\section{The coil bound}
\label{app:coil-proof}
\begin{proof}
For $m=1,2,3$ ($Q_{21},Q_{22},Q_{23}$), we prove the bound directly. Retain the endpoint tips and join the first and last Gray layers. In $Q_{21}$, use $I_{r_0}(t_0)$ and $O_{r_0}(t_0)$. In $Q_{22}$, use boundary tip $t_0$ and interior tip $t_1$. In $Q_{23}$, use boundary tip $t_0$ and interior tips $t_1,t_2,t_3$. The distinct tips prevent unwanted tip contacts. The resulting lengths, averaging the two root orders in the last two cases, give
\[
\begin{aligned}
b(21)&\ge |I_{r_0}(t_0)|+|O_{r_0}(t_0)|=743{,}042,\\
b(22)&\ge (S_0+S_1)/2=1{,}485{,}848,\\
b(23)&\ge (S_0+S_1+S_2+S_3)/2=2{,}971{,}492.
\end{aligned}
\]
Each exceeds $(17/48)2^d$ in its dimension.

For $m\ge4$, replace the boundary paths by $I_{r_0}(t_5)$ and $O_{r_1}(t_5)$, retaining $t_5$: its exclusive use at the two ends supplies the closing edge between the first and last Gray layers, while the root rule and Lemma~\ref{lem:assembly} exclude other unwanted contacts.

Across the same ten choices, the interior paths contribute $(N-1)T$ vertices in total and the boundary paths contribute $5S_5$. No vertices are deleted, and a cycle has as many edges as vertices. Therefore, with $N=2^{d-21}$,
\[
\begin{aligned}
b(d)&\ge\frac{(N-1)T+5S_5}{10}
=\frac{928503}{2621440}\,2^d-\frac{2192}{5}\\
&\ge\frac{17}{48}\,2^d+\frac{752}{15}
>\frac{17}{48}\,2^d,
\end{aligned}
\]
where the second line uses $d\ge24$.
\end{proof}

\clearpage
\section{Comparison plots}
\begin{figure}[H]
\centering
\includegraphics[width=\textwidth]{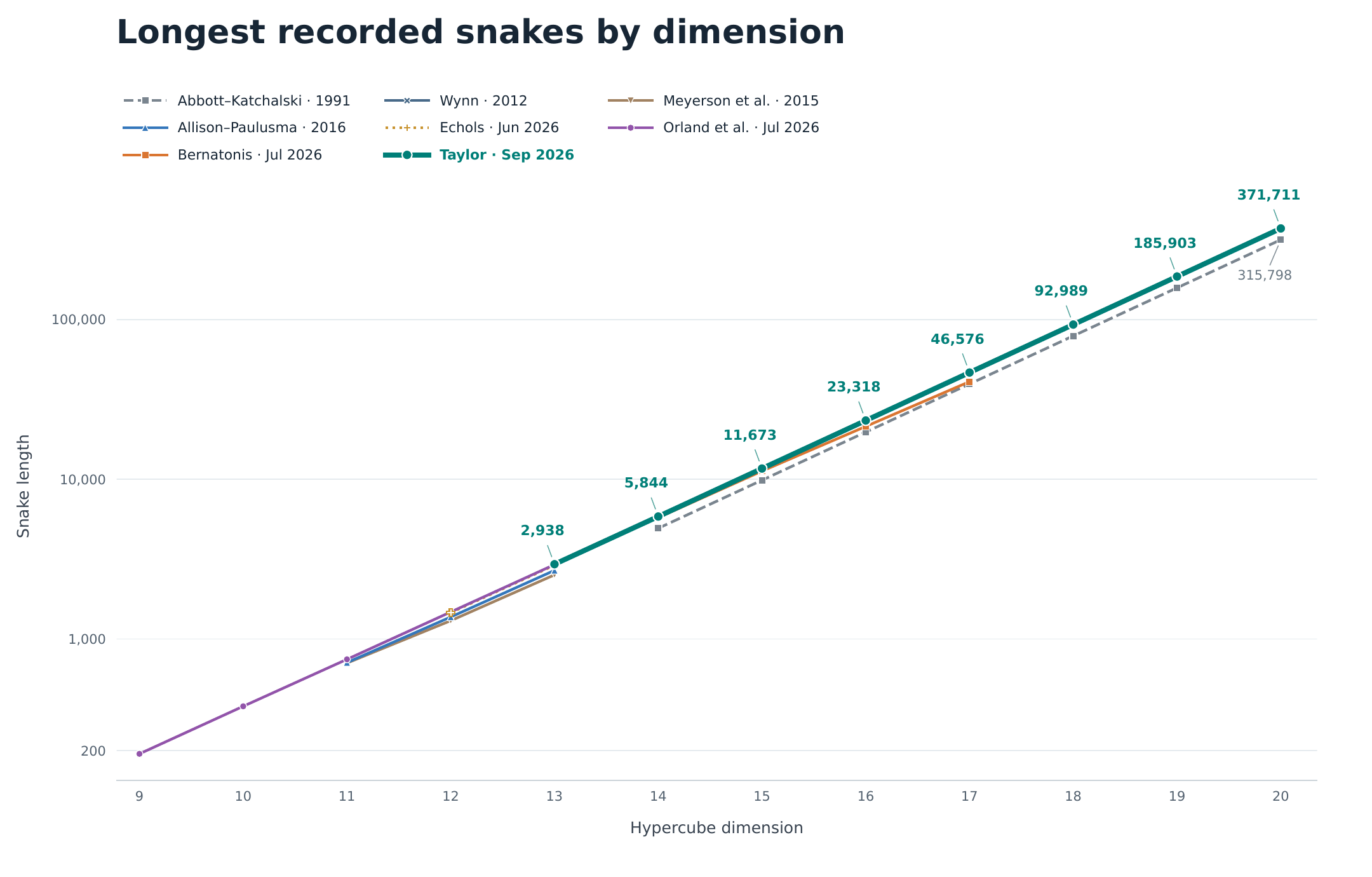}
\end{figure}

\begin{figure}[H]
\centering
\includegraphics[width=\textwidth]{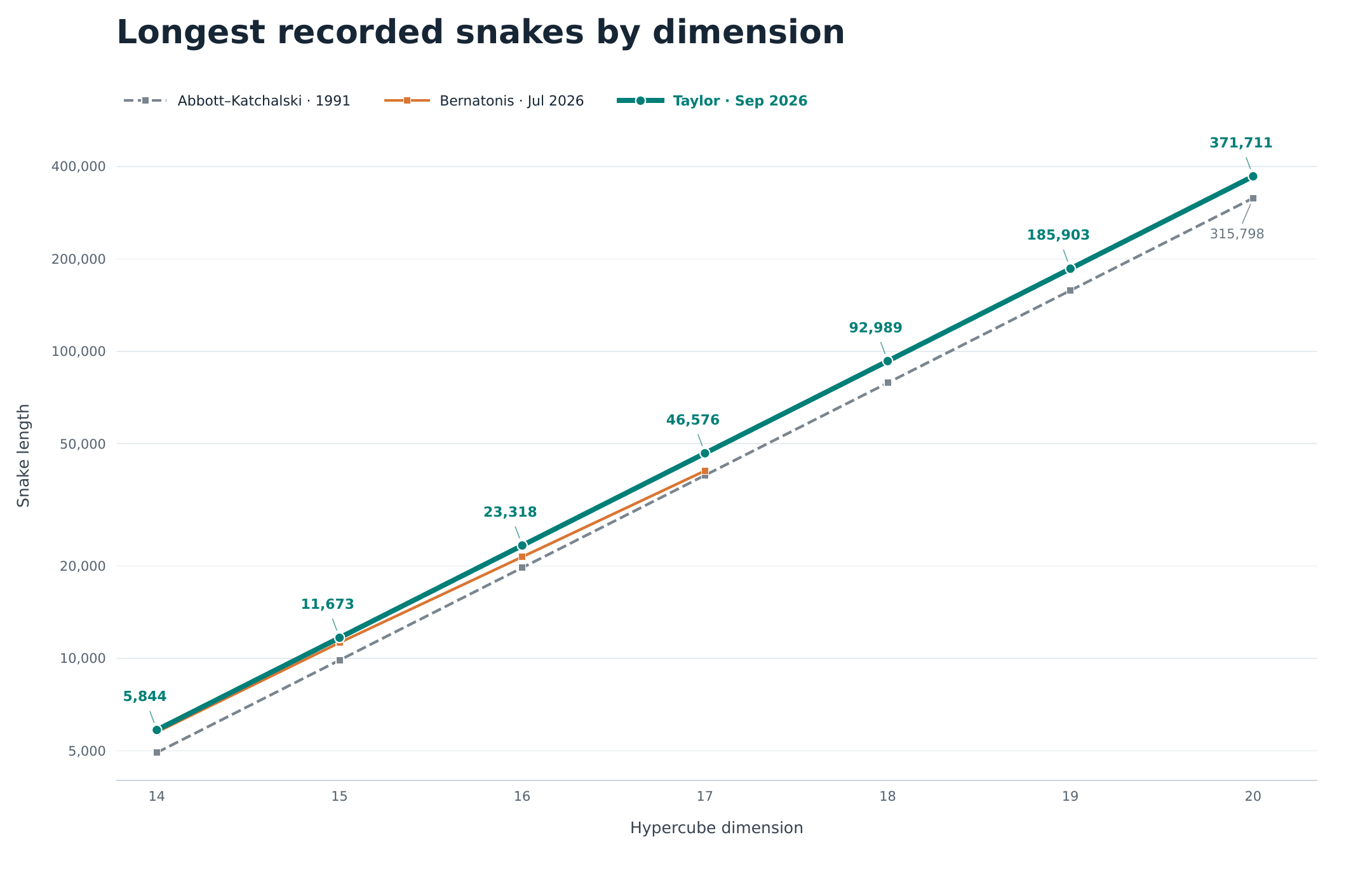}
\end{figure}

\end{document}